\documentclass[12pt]{article}
\usepackage{e-jc}



\usepackage{amsthm,amsmath,amssymb,mathtools}

\usepackage{graphicx}

\usepackage[colorlinks=true,citecolor=black,linkcolor=black,urlcolor=blue]{hyperref}



\newcommand\R{\mathbb{R}}
\newcommand\Z{\mathbb{Z}}

\newcommand\C{\mathbb{C}}
\newcommand\eps{\varepsilon}

\theoremstyle{plain}
\newtheorem{theorem}{Theorem}
\newtheorem{lemma}[theorem]{Lemma}

\newtheorem{proposition}[theorem]{Proposition}

\theoremstyle{definition}

\theoremstyle{remark}
\newtheorem{remark}[theorem]{Remark}



\title{\bf A bound on partitioning clusters}


\author{Daniel Kane\\
\small Department of Mathematics, UC San Diego\\[-0.8ex]
\small 9500 Gilman Drive, La Jolla CA 92093-0112\\[-0.8ex] 
\small\tt dakane@ucsd.edu\\
\and
Terence Tao \\
\small Department of Mathematics, UCLA\\[-0.8ex]
\small 405 Hilgard Ave, Los Angeles CA 90095\\[-0.8ex]
\small\tt tao@math.ucla.edu
}


\date{\dateline{Feb 2, 2017}{Feb 2, 2017}\\
\small Mathematics Subject Classifications: 11B30}

\begin{document}

\maketitle


\begin{abstract}
Let $X$ be a finite collection of sets (or ``clusters'').  We consider the problem of counting the number of ways a cluster $A \in X$ can be partitioned into two disjoint clusters $A_1, A_2 \in X$, thus $A = A_1 \uplus A_2$ is the disjoint union of $A_1$ and $A_2$; this problem arises in the run time analysis of the ASTRAL algorithm in phylogenetic reconstruction.  We obtain the bound
$$ | \{ (A_1,A_2,A) \in X \times X \times X: A = A_1 \uplus A_2 \} | \leq |X|^{3/p} $$
where $|X|$ denotes the cardinality of $X$, and $p \coloneqq \log_3 \frac{27}{4} = 1.73814\dots$, so that $\frac{3}{p} = 1.72598\dots$.  Furthermore, the exponent $p$ cannot be replaced by any larger quantity.  This improves upon the trivial bound of $|X|^2$.  The argument relies on establishing a one-dimensional convolution inequality that can be established by elementary calculus combined with some numerical verification.

In a similar vein, we show that for any subset $A$ of a discrete cube $\{0,1\}^n$, the additive energy of $A$ (the number of quadruples $(a_1,a_2,a_3,a_4)$ in $A^4$ with $a_1+a_2=a_3+a_4$) is at most $|A|^{\log_2 6}$, and that this exponent is best possible.
\end{abstract}

\section{Introduction}

The purpose of this note is to establish the following theorem:

\begin{theorem}\label{main} Let $X$ be a finite collection of sets.  Then we have
\begin{equation}\label{aaa}
 | \{ (A_1,A_2,A) \in X \times X \times X: A = A_1 \uplus A_2 \} | \leq |X|^{3/p}
\end{equation}
where $|X|$ denotes the cardinality of $X$, $A = A_1 \uplus A_2$ denotes the assertion that $A$ is the disjoint union of $A_1$ and $A_2$, and
$p \coloneqq \log_3 \frac{27}{4} = 1.73814\dots$.  Furthermore, the exponent $p$ cannot be replaced by any larger quantity.
\end{theorem}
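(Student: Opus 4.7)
The plan is to prove a trilinear strengthening of~\eqref{aaa} by induction on the size of the ambient ground set. The inductive step will reduce to a scalar inequality in six nonnegative parameters that, by exploiting symmetry, collapses to an essentially one-variable question amenable to elementary calculus; sharpness will then be established by an explicit family of extremal set systems.

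First I would introduce the trilinear functional
$$N(X, Y, Z) := |\{(A_1, A_2, A) \in X \times Y \times Z : A = A_1 \uplus A_2\}|,$$
so that the theorem amounts to $N(X,X,X) \le |X|^{3/p}$, and aim for the stronger estimate $N(X, Y, Z) \le (|X||Y||Z|)^{1/p}$. Assume all clusters lie in $2^U$ and induct on $|U|$. Pick $u \in U$, decompose each family as $X = X_0 \sqcup X_1$ according to whether $u \in A$, and identify $X_1$ with a subsystem of $2^{U \setminus \{u\}}$ (similarly for $Y, Z$). A case analysis on whether $u \in A$ (noting that $u \in A$ forces $u$ to lie in exactly one of $A_1, A_2$) yields the three-term recursion
$$N(X, Y, Z) = N(X_0, Y_0, Z_0) + N(X_1, Y_0, Z_1) + N(X_0, Y_1, Z_1).$$
Applying the inductive hypothesis to each summand reduces the claim to the scalar convolution inequality
$$(a_0 b_0 c_0)^q + (a_1 b_0 c_1)^q + (a_0 b_1 c_1)^q \le ((a_0+a_1)(b_0+b_1)(c_0+c_1))^q \qquad (\ast)$$
for all $a_i, b_i, c_i \ge 0$, where $q := 1/p = \log_{27/4} 3$.

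The main obstacle is $(\ast)$. By homogeneity normalize $a_0+a_1=b_0+b_1=c_0+c_1=1$ and write $a=a_0,\ b=b_0,\ c=c_0$; then $(\ast)$ becomes $F(a,b,c):=(abc)^q + ((1-a)b(1-c))^q + (a(1-b)(1-c))^q \le 1$. The symmetry $F(a,b,c)=F(b,a,c)$ suggests maximizing along the axis $a=b$; solving $\partial_a F = \partial_c F = 0$ there isolates the unique interior critical point $a=b=2/3$, $c=1/3$, at which $F = 3(4/27)^q = 1$ precisely because $q = \log_{27/4} 3$. A second-order expansion at this point shows that breaking the symmetry $a=b$ is unfavorable provided $q<2/3$, which is equivalent to $p>3/2$ and holds since $\log_3(27/4) \approx 1.738$; this justifies the reduction to the one-variable subproblem along $a=b$. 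It then remains to rule out a larger value on the faces of $[0,1]^3$---the elementary but tedious calculus step alluded to in the abstract, possibly combined with the numerical verification referenced there.

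For sharpness of the exponent, take $X_n := \binom{[n]}{\lfloor n/3 \rfloor} \cup \binom{[n]}{\lceil 2n/3 \rceil}$. Any $A \in X_n$ with $|A| = 2n/3$ admits $\binom{2n/3}{n/3}$ ordered partitions into two $(n/3)$-element sets, both of which automatically lie in $X_n$, giving $N(X_n) \ge \binom{n}{2n/3}\binom{2n/3}{n/3}$ while $|X_n| \le 2\binom{n}{n/3}$. Stirling's approximation yields
$$\frac{\log N(X_n)}{\log |X_n|} \longrightarrow \frac{3 \log 3}{3\log 3 - 2\log 2} = \frac{\log 27}{\log(27/4)} = \frac{3}{p},$$
so the exponent in~\eqref{aaa} cannot be increased.
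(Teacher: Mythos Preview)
Your overall architecture is essentially the paper's: both prove the trilinear bound $N(X,Y,Z)\le(|X|\,|Y|\,|Z|)^{1/p}$ by induction on the ground set (your three-term recursion is exactly the $n=1$ instance of the paper's convolution Theorem~\ref{conv}), and after the substitution $c\mapsto 1-c$ your normalized inequality $(\ast)$ becomes the fully $S_3$-symmetric form the paper analyzes. Your sharpness construction is also identical.

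The gaps are in your handling of $(\ast)$. First, the assertion that $\partial_aF=\partial_cF=0$ along $a=b$ ``isolates the \emph{unique} interior critical point $a=b=2/3$, $c=1/3$'' is false: there is a second interior critical point on this diagonal, near $(a,b,c)\approx(0.914,0.914,0.829)$ in your coordinates, at which $F\approx 0.977$. This second point is precisely what forces the numerical verification mentioned in the abstract, and the paper treats it explicitly. Second, a second-order expansion at the single point $(2/3,2/3,1/3)$ cannot ``justify the reduction to the one-variable subproblem along $a=b$'' globally: it only shows that breaking the symmetry is \emph{locally} unfavorable there. In fact your $F$ carries a hidden $S_3$-symmetry in the variables $(a,b,1-c)$, and the orbit of the second critical point contains interior critical points with $a\ne b$, so the reduction you propose is simply not valid. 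The paper's argument instead shows that at \emph{any} interior critical point some two of $a,b,1-c$ must coincide (because a certain two-to-one auxiliary function takes equal values on all three), reduces along that diagonal, finds exactly two critical points, and checks the non-symmetric one numerically.
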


Note that $\frac{3}{p} = 1.72598\dots$.  Thus the inequality \eqref{aaa} improves upon the trivial bound
\begin{equation}\label{aaa-triv}
 | \{ (A_1,A_2,A) \in X \times X \times X: A = A_1 \uplus A_2 \} | \leq |X|^2
\end{equation}
that arises simply because there are $|X|^2$ pairs $(A_1,A_2)$, and this pair uniquely determines $A$.

Theorem \ref{main} has applications in analyzing the running time of several dynamic programming algorithms used in phylogenetic reconstruction literature.  Several published algorithms \cite{mrbzsw, mw, hl, bs} seek to find a median tree that minimizes the total distance to an input set of trees, for various definitions of a distance between two trees.   For example, a method called ASTRAL \cite{mrbzsw, mw} defines the distance between two trees as the number of quartet trees induced by one tree but not the other, and seeks to find the unrooted tree that minimizes the sum of this quartet distance to its input set of unrooted trees, a problem that turns out to be NP-Hard \cite{ls}.
To make this optimization problem tractable, ASTRAL uses a dynamic programming approach where the final tree is built by successively dividing each subset of leaves (called a cluster) $A$ into smaller clusters, $A'$ and $A \backslash A'$ while minimizing
the number of quartets in the input tree set that will have to be missing from any tree that includes $A'|A \backslash A'$ as a bipartition (i.e., a branch; note that a branch in an unrooted tree is just a bipartition of the leaves).
If all possible subsets $A'\subset A$ are considered when dividing $A$ to two subsets, the algorithm provably returns the optimal tree
(also, the optimal solution has been shown to enjoy statistical consistency under certain models of gene and species evolution \cite{mrbzsw}).
However, such a dynamic programming algorithm will have to explore the power set of the set of leaves and will thus require time exponential in the number of leaves.

To give a practical alternative, ASTRAL solves a constrained version of the problem where a set $X$ of clusters is defined in advance to constrain the search space; when dividing $A$, we only look for $A'\subset A$ such that $A'\in X$ and $A \backslash A' \in X$.
The set $X$ is defined heuristically, and the running time of ASTRAL should be defined asymptotically as a function of $|X|$.
Throughout the dynamic programming execution, ASTRAL considers all possible pairs of clusters $x,y\in X$ exactly once iff $x\cap y = \emptyset$ and $x \cup y \in X$.
Therefore, establishing the asymptotic running time of ASTRAL with regards to $|X|$ requires bounding the left-hand side of \eqref{aaa}
ASTRAL simply used the trivial $O(|X|^2)$ upper bound in their running time analysis. We can now improve that analysis to $O(|X|^{1.72598\dots})$.

We first demonstrate why the exponent $p$ is best possible.  Let $n$ be a large multiple of $3$, and let $X$ denote the collection of all sets $A \subset \{1,\dots,n\}$ whose cardinality $|A|$ is equal to either $n/3$ or $2n/3$.  Clearly
$$ |X| = \binom{n}{n/3} + \binom{n}{2n/3} = 2 \frac{n!}{(n/3)! (2n/3)!}.$$
On the other hand, if $A \in X$ has cardinality $2n/3$, then it can be partitioned in $\binom{2n/3}{n/3}$ ways into $A_1 \uplus A_2$ with $A_1,A_2 \in X$, and no partition is available when $A$ has cardinality $n/3$.  Thus the left-hand side of \eqref{aaa} is equal to
$$ \binom{n}{2n/3} \times \binom{2n/3}{n/3} = \frac{n!}{(n/3)! (n/3)! (n/3)!}.$$
Using the Stirling approximation $n! = n^n e^{-n+o(n)}$ as $n \to \infty$, we conclude that the left-hand side of \eqref{aaa} is equal to $\exp( n (\log 3 + o(1)))$, while $|S|$ is equal to $\exp( \frac{n}{3} (\log \frac{27}{4}+o(1)) )$, and on sending $n \to \infty$ we conclude that \eqref{aaa} can fail whenever $p > \log_3 \frac{27}{4}$.

Now we show why \eqref{aaa} holds for $p = \log_3 \frac{27}{4}$.  This will be a consequence of the following convolution estimate on the discrete unit cube $\{0,1\}^n$ (which we view as a subset of $\Z^n$ for the purposes of defining convolution):

\begin{theorem}[Convolution]\label{conv}  Let $n \geq 1$ be a natural number, and let $f,g,h: \{0,1\}^n \to \R$ be functions.  Set $p := \log_3 \frac{27}{4}$.  Then we have
\begin{equation}\label{po}
 f*g*h(1^n) \leq \|f\|_{\ell^p(\{0,1\}^n)} \|g\|_{\ell^p(\{0,1\}^n)} \|h\|_{\ell^p(\{0,1\}^n)}
\end{equation}
where $1^n$ denotes the element $(1,\dots,1)$ of $\{0,1\}^n$, $f*g*h$ denotes the convolution
$$ f*g*h(w) \coloneqq \sum_{x,y,z \in \{0,1\}^n: x+y+z = w} f(x) g(y) h(z)$$
and $\| \cdot \|_{\ell^p(\{0,1\}^n)}$ denotes the $\ell^p$ norm
$$ \|f\|_{\ell^p(\{0,1\}^n)} \coloneqq (\sum_{x \in \{0,1\}^n} |f(x)|^p)^{1/p}.$$
\end{theorem}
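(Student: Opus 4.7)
The plan is to prove Theorem \ref{conv} by induction on $n$, with all the content concentrated in the base case $n=1$, which becomes a finite-dimensional extremization that (as the abstract warns) will require elementary calculus together with some numerical verification.

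For the inductive step, I split the cube along the last coordinate: for $i \in \{0,1\}$ set $f_i(x') := f(x',i)$ for $x' \in \{0,1\}^{n-1}$, and define $g_i, h_i$ analogously. The constraint $x_n + y_n + z_n = 1$ with $x_n, y_n, z_n \in \{0,1\}$ forces exactly one of the three to equal $1$, so
$$f * g * h(1^n) \;=\; f_1 * g_0 * h_0(1^{n-1}) + f_0 * g_1 * h_0(1^{n-1}) + f_0 * g_0 * h_1(1^{n-1}).$$
Applying the inductive hypothesis to each summand with $\alpha_i := \|f_i\|_{\ell^p}$, $\beta_i := \|g_i\|_{\ell^p}$, $\gamma_i := \|h_i\|_{\ell^p}$, and using $\|f\|_{\ell^p} = (\alpha_0^p+\alpha_1^p)^{1/p}$ (and similarly for $g, h$), the bound to be proven reduces precisely to the $n = 1$ instance of the theorem applied to the scalar pairs $(\alpha_0,\alpha_1), (\beta_0,\beta_1), (\gamma_0,\gamma_1)$. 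Thus the induction closes as soon as the $n = 1$ case is settled.

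For $n = 1$, I would first pass to the case $f, g, h \geq 0$ by the triangle inequality, then use homogeneity to normalize $\|\alpha\|_{\ell^p} = \|\beta\|_{\ell^p} = \|\gamma\|_{\ell^p} = 1$. The task becomes showing that the trilinear form
$$L(\alpha,\beta,\gamma) \;=\; \alpha_1 \beta_0 \gamma_0 + \alpha_0 \beta_1 \gamma_0 + \alpha_0 \beta_0 \gamma_1$$
does not exceed $1$ on the product of three unit $\ell^p$ spheres in the positive orthant of $\R^2$. Lagrange multipliers, combined with the $S_3$ symmetry of $L$ in its three arguments, single out the symmetric critical point $\alpha = \beta = \gamma = ((2/3)^{1/p}, (1/3)^{1/p})$, at which the defining identity $3^p = 27/4$ gives $L^p = 3^p \cdot (2/3)^2 (1/3) = 1$, so the candidate extremal value is exactly the desired $1$, matching the combinatorial construction produced just before the theorem.

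To turn this candidate value into a global bound, I would apply H\"older's inequality in the $\alpha$-coordinates to peel off the $\alpha$ variable, reducing to the four-variable inequality
$$(\beta_0 \gamma_1 + \beta_1 \gamma_0)^q + (\beta_0 \gamma_0)^q \leq 1, \qquad q := \frac{p}{p-1},$$
subject to $\|\beta\|_{\ell^p} = \|\gamma\|_{\ell^p} = 1$; one checks that H\"older is tight at the symmetric extremum so no sharpness is lost. Parametrizing $\beta_0^p = u$, $\gamma_0^p = v$ turns the left-hand side into a function $F(u,v)$ on $[0,1]^2$ which is symmetric in $u \leftrightarrow v$ and whose boundary values are easily controlled. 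If one can show that the only interior critical points of $F$ lie on the diagonal $u = v$, the problem collapses to the single-variable inequality $3 s^{2/p}(1-s)^{1/p} \leq 1$, equivalently $s^2(1-s) \leq 4/27$, which follows from one-variable calculus. The main obstacle, and where I expect the paper's "numerical verification" to enter, is ruling out asymmetric interior critical points of $F$: the Lagrange equations are transcendental in $p$, so the argument likely combines monotonicity of certain partial derivatives with an explicit numerical check on a bounded interval in order to confirm that the only extremum is the symmetric one.
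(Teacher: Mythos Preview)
Your inductive reduction to $n=1$ is correct and is exactly what the paper does. The difficulty, as you anticipated, is entirely in the base case, and here your outline contains a genuine gap.

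After your H\"older step you are left with $F(u,v)=(uv)^{q/p}+\bigl(u^{1/p}(1-v)^{1/p}+v^{1/p}(1-u)^{1/p}\bigr)^{q}$ on $[0,1]^2$, and you hope to show that all interior critical points lie on the diagonal, after which you claim the problem becomes $3s^{2/p}(1-s)^{1/p}\le 1$, i.e.\ $s^{2}(1-s)\le 4/27$. Both parts of this plan fail. First, there \emph{are} off-diagonal critical points. By the envelope theorem, interior critical points of $F$ are exactly the projections to the $(b,c)$-plane of the critical points of the full trilinear problem; the paper locates, besides the symmetric extremum, a one-parameter $S_{3}$-orbit of critical points with exactly two of $(a,b,c)$ equal (numerically $(a,b,c)\approx(0.914,0.914,0.171)$ and permutations). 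The permutation with $a=b\neq c$ lands off the diagonal of $F$, so your proposed dichotomy cannot be established --- it is simply false. Second, even on the diagonal your one-variable reduction is wrong: $F(s,s)=\max_{\|\alpha\|_{p}=1}L(\alpha,\beta,\beta)^{q}$, and the H\"older-optimal $\alpha$ equals $\beta$ only at the single point $s=2/3$, not identically. Thus $F(s,s)$ is \emph{not} $(3s^{2/p}(1-s)^{1/p})^{q}$; in fact $s\mapsto F(s,s)$ has a second interior critical point near $s\approx 0.914$ (coming from the permutation $b=c\neq a$), which your AM--GM reduction does not see.

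What the paper actually does for $n=1$ is to stay in all three variables but change coordinates so that the Lagrange system becomes
\[
x+x^{1-p}\;=\;y+y^{1-p}\;=\;z+z^{1-p}\;=\;x+y+z .
\]
Since $t\mapsto t+t^{1-p}$ is unimodal, each value is attained at most twice, and pigeonhole forces two of $x,y,z$ to coincide. Setting $x=y$ collapses the system to a single equation $u=1+(u/2)^{1/(p-1)}$ in $u=x^{-p}$, which by convexity has exactly two roots: $u=2$ (the symmetric point, where the inequality is an equality) and a second root $u\approx 10.703$, at which the inequality is verified numerically to hold strictly. Separate short arguments dispose of the boundary faces and of neighbourhoods of the corners such as $(0,1,1)$. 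So the ``numerical verification'' enters not to rule out asymmetric critical points, but to evaluate the inequality at the one nontrivial asymmetric critical point that genuinely exists.
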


Let us now see why Theorem \ref{conv} implies Theorem \ref{main}.  We first observe that to prove Theorem \ref{main}, it suffices to do so under the additional assumption that all the sets in $X$ are finite. Indeed, if we let $\Omega \coloneqq \bigcup_{A \in X} A$ denote the union of the sets in $X$, then $X$ partitions $\Omega$ into at most $2^{|X|}$ cells.  Some of these cells may be infinite, but we may replace any such cell with a single point without affecting either the left or right-hand side of \eqref{aaa}.  After applying this replacement, every set in $X$ is now finite.

Without loss of generality, we may now assume that all the sets $A$ in $X$ are subsets of $\{1,\dots,n\}$ for some natural number $n \geq 1$.  We now define the functions $f,g,h: \{0,1\}^n \to \{0,1\}$ as follows.  For any $(a_1,\dots,a_n) \in \{0,1\}^n$, we set $f(a_1,\dots,a_n) = g(a_1,\dots,a_n) = 1$ when the set $\{ 1 \leq i \leq n: a_i = 1 \}$ lies in $X$, and $f(a_1,\dots,a_n) = g(a_1,\dots,a_n) = 0$ otherwise.  Similarly, we set $h(a_1,\dots,a_n)=1$ when the set $\{ 1 \leq i \leq n: a_i = 0 \}$ lies in $X$, and $h(a_1,\dots,a_n) = 0$ otherwise.  It is then easy to see that
$$ \|f\|_{\ell^p(\{0,1\}^n)} = \|g\|_{\ell^p(\{0,1\}^n)} = \|h\|_{\ell^p(\{0,1\}^n)} = |X|^{1/p}$$
and
$$ f*g*h(1^n) = | \{ (A_1,A_2,A) \in X \times X \times X: A = A_1 \uplus A_2 \} |$$
giving the claim.

\begin{remark}  Young's convolution inequality establishes \eqref{po} with $p$ replaced by $3/2$ (or any exponent less than $3/2$).  This corresponds to the trivial bound \eqref{aaa-triv}.  The ability to improve the exponents in Young's convolution inequality is reminiscent of the Kunze-Stein phenomenon \cite{kunze} in semisimple Lie groups, as well as the hypercontractivity inequality on the Boolean cube (see e.g. \cite{hyper}).  Indeed, the proof of Theorem \ref{conv} will be similar to the proof of hypercontractivity in that we will soon reduce matters to verifying the one-dimensional case $n=1$.
\end{remark}

\begin{remark}  The above argument in fact establishes the more general inequality
$$
 | \{ (A_1,A_2,A) \in X \times X_1 \times X_2: A = A_1 \uplus A_2 \} | \leq |X|^{1/p} |X_1|^{1/p} |X_2|^{1/p}
$$
whenever $X,X_1,X_2$ are finite collections of sets.
\end{remark}

The form of Theorem \ref{conv} is very amenable to an induction on dimension:

\begin{proposition}  Let $n_1,n_2 \geq 1$ be natural numbers.  If Theorem \ref{conv} holds for $n=n_1$ and $n=n_2$, then it holds for $n=n_1+n_2$.
\end{proposition}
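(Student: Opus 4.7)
The approach is a straightforward tensorization, paralleling how hypercontractivity on the Boolean cube reduces to the one-dimensional case. Identify $\{0,1\}^{n_1+n_2}$ with $\{0,1\}^{n_1} \times \{0,1\}^{n_2}$ and, for each $x' \in \{0,1\}^{n_1}$, introduce the slice $f_{x'} : \{0,1\}^{n_2} \to \R$ defined by $f_{x'}(x'') := f(x',x'')$, and analogously $g_{y'}$, $h_{z'}$. Because addition on the product cube is coordinate-wise, the convolution at the all-ones element splits as
\[
(f*g*h)(1^{n_1+n_2}) = \sum_{x'+y'+z'=1^{n_1}} (f_{x'} * g_{y'} * h_{z'})(1^{n_2}).
\]

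Next I would apply Theorem \ref{conv} at $n = n_2$ to each inner convolution, bounding it by $\|f_{x'}\|_{\ell^p(\{0,1\}^{n_2})} \|g_{y'}\|_{\ell^p(\{0,1\}^{n_2})} \|h_{z'}\|_{\ell^p(\{0,1\}^{n_2})}$. Defining auxiliary functions $F, G, H : \{0,1\}^{n_1} \to \R$ by $F(x') := \|f_{x'}\|_{\ell^p(\{0,1\}^{n_2})}$, and analogously for $G$ and $H$, this substitutes into the previous display to give $(f*g*h)(1^{n_1+n_2}) \leq (F*G*H)(1^{n_1})$. Applying Theorem \ref{conv} a second time, now at $n = n_1$, to the triple $(F,G,H)$ yields $(F*G*H)(1^{n_1}) \leq \|F\|_{\ell^p(\{0,1\}^{n_1})} \|G\|_{\ell^p(\{0,1\}^{n_1})} \|H\|_{\ell^p(\{0,1\}^{n_1})}$.

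To close the induction, I would invoke the Fubini-type identity
\[
\|F\|_{\ell^p(\{0,1\}^{n_1})}^p = \sum_{x' \in \{0,1\}^{n_1}} \|f_{x'}\|_{\ell^p(\{0,1\}^{n_2})}^p = \|f\|_{\ell^p(\{0,1\}^{n_1+n_2})}^p,
\]
together with its analogues for $G$ and $H$, which combine to deliver \eqref{po} for $n = n_1+n_2$. There is no real obstacle here: the crucial algebraic point is simply that the \emph{same} exponent $p$ appears on both sides of \eqref{po}, so it is preserved intact through the two successive applications and the Fubini identity at the level of $\ell^p$. The entire content of Theorem \ref{conv} is therefore concentrated in the one-dimensional base case $n=1$.
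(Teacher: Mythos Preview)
Your proof is correct and follows essentially the same tensorization argument as the paper: slice along the first $n_1$ coordinates, apply the $n_2$-dimensional inequality to each inner convolution, then apply the $n_1$-dimensional inequality to the resulting slice norms. Your explicit mention of the Fubini identity $\|F\|_{\ell^p}^p = \|f\|_{\ell^p}^p$ makes transparent a step the paper leaves implicit, but otherwise the two proofs coincide.
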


\begin{proof}  Let $f,g,h: \{0,1\}^{n_1+n_2} \to \R$ be functions.  For any $x \in \{0,1\}^{n_1}$, let $f_x: \{0,1\}^{n_2} \to \R$ denote the function
$$ f_x(x') \coloneqq f(x,x')$$
for $x' \in \{0,1\}^{n_2}$ (where we identify the pair $(x,x')$ with an element of $\{0,1\}^{n_1+n_2}$ in the usual fashion).  Then we can write
$$ f*g*h(1^{n_1+n_2}) = \sum_{x,y,z \in \{0,1\}^{n_1}: x+y+z = 1^{n_1}} f_x * g_y * h_z(1^{n_2}).$$
Applying Theorem \ref{conv} for $n=n_2$ and the functions $f_x,g_y,h_z$, we thus have
$$ f*g*h(1^{n_1+n_2}) \leq \sum_{x,y,z \in \{0,1\}^{n_1}: x+y+z = 1^{n_1}} \| f_x \|_{\ell^p(\{0,1\}^{n_2})}
\| g_y \|_{\ell^p(\{0,1\}^{n_2})} \| h_z \|_{\ell^p(\{0,1\}^{n_2})}.$$
Applying Theorem \ref{conv} for $n=n_1$ and the functions $x \mapsto \| f_x \|_{\ell^p(\{0,1\}^{n_2})}$,
$y \mapsto \| g_y \|_{\ell^p(\{0,1\}^{n_2})}$, $z \mapsto \| h_z \|_{\ell^p(\{0,1\}^{n_2})}$,
we obtain
$$ f*g*h(1^{n_1+n_2}) \leq \|f\|_{\ell^p(\{0,1\}^{n_1+n_2}} \|g\|_{\ell^p(\{0,1\}^{n_1+n_2}} \|h\|_{\ell^p(\{0,1\}^{n_1+n_2}}$$
which gives Theorem \ref{conv} for $n=n_1+n_2$.
\end{proof}

From this proposition and induction, we see that to prove Theorem \ref{conv}, it suffices to do so in the one-dimensional case $n=1$.  We may normalize
$$\|f\|_{\ell^p(\{0,1\})} = \|g\|_{\ell^p(\{0,1\})} = \|h\|_{\ell^p(\{0,1\})} = 1,$$
so that we may write
\begin{align*}
f(0) &= a^{1/p} \\
f(1) &= (1-a)^{1/p} \\
g(0) &= b^{1/p} \\
g(1) &= (1-b)^{1/p} \\
h(0) &= c^{1/p} \\
h(1) &= (1-c)^{1/p}
\end{align*}
for some $0 \leq a,b,c \leq 1$.  The inequality \eqref{po} then simplifies to the elementary inequality
\begin{equation}\label{abc}
 (ab(1-c))^{1/p} + (bc(1-a))^{1/p} + (ca(1-b))^{1/p} \leq 1.
\end{equation}
Observe that equality is attained here when $(a,b,c) = (0,1,1), (1,0,1), (0,1,1), (2/3,2/3,2/3)$; the final case $(a,b,c) = (2/3,2/3,2/3)$ also reveals that the inequality \eqref{abc} fails if $p$ is replaced by any quantity larger than $\log_3 \frac{27}{4}$.  This is of course consistent with the second part of Theorem \ref{main}.

The fact that equality is attained in \eqref{abc} in four different locations seems to rule out any quick proof of \eqref{abc} using convexity-based methods such as Jensen's inequality.  Instead, we argue as follows.  First observe that when $a=0$, the left-hand side of \eqref{abc} simplifies to $(bc)^{1/p}$, and it is then clear that the inequality \eqref{abc} holds whenever $a=0$ and is strict unless $(a,b,c)=(0,1,1)$.  Next, we analyze the left-hand side of \eqref{abc} for $(a,b,c)$ close to $(0,1,1)$.  Writing $(a,b,c) = (\alpha, 1-\beta, 1-\gamma)$ for some small $\alpha,\beta,\gamma \geq 0$, we can write the left-hand side of \eqref{abc} as
$$
(\alpha \gamma)^{1/p} + ( (1-\beta)(1-\gamma)(1-\alpha) )^{1/p} + (\alpha \beta)^{1/p}.$$
For $\alpha,\beta,\gamma$ small enough, we have
$$(1-\beta)(1-\gamma)(1-\alpha) \leq 1 - \frac{1}{2}(\alpha+\beta+\gamma)$$
(say), which by the concavity of $x \mapsto x^{1/p}$ implies that
$$( (1-\beta)(1-\gamma)(1-\alpha) )^{1/p} \leq 1 - \frac{1}{2p}(\alpha+\beta+\gamma).$$
On the other hand, from the arithmetic mean-geometric mean inequality we certainly have
$$ (\alpha \gamma)^{1/p}, (\alpha \beta)^{1/p} \leq (\alpha+\beta+\gamma)^{2/p}.$$
Since $p<2$, we conclude that the inequality \eqref{abc} holds whenever $\alpha+\beta+\gamma$ is sufficiently small, or equivalently when $(a,b,c)$ is sufficiently close to $(0,1,1)$.  Since both sides of \eqref{abc} depend continuously on $a,b,c$, we now see that \eqref{abc} holds whenever $a$ is sufficiently small, and similarly for $b$ and $c$.  Thus we may assume $a,b,c \geq \eps$ for some small absolute constant $\eps>0$.

We next consider the boundary case $a=1$, $b,c > \eps$.  Here, we claim strict inequality:
\begin{equation}\label{bcp}
 (b(1-c))^{1/p} + (c(1-b))^{1/p} < 1.
\end{equation}
Indeed, from the Cauchy-Schwarz inequality one has
$$
(b(1-c))^{1/2} + (c(1-b))^{1/2} \leq \left((b^{1/2})^2 + ((1-b)^{1/2})^2\right)^{1/2}\left(((1-c)^{1/2})^2 + (c^{1/2})^2\right)^{1/2} = 1
$$
and the claim follows since $\frac{1}{p} > \frac{1}{2}$.

For similar reasons, we obtain strict inequality in \eqref{abc} when $b=1$ or $c=1$.  By continuity, this establishes \eqref{abc} in all regions except the region
$$ \eps \leq a,b,c \leq 1-\eps$$
for some small absolute constant $\eps>0$.  We now work in this region.

We can rewrite \eqref{abc} as
$$ \left(\frac{1-c}{c}\right)^{1/p} + \left(\frac{1-a}{a}\right)^{1/p} + \left(\frac{1-b}{b}\right)^{1/p}  \leq \frac{1}{(abc)^{1/p}};$$
writing $x \coloneqq (\frac{1-a}{a})^{1/p}$, $y \coloneqq (\frac{1-b}{b})^{1/p}$, $z \coloneqq (\frac{1-c}{c})^{1/p}$, $x,y,z$ lies in the region
\begin{equation}\label{xyz}
 \left(\frac{\eps}{1-\eps}\right)^p \leq x,y,z \leq \left(\frac{1-\eps}{\eps}\right)^p
\end{equation}
and the above inequality transforms to
$$ x+y+z \leq \exp\left( \frac{f(x)+f(y)+f(z)}{p} \right)$$
or equivalently
\begin{equation}\label{fpx}
 f(x)+f(y)+f(z) - p \log(x+y+z) \geq 0
\end{equation}
where $f: (0,+\infty) \to (0,+\infty)$ is the function $f(z) \coloneqq \log(1+z^p)$.

Since the region \eqref{xyz} is compact, and the inequality \eqref{fpx} is already known on the boundary of this region, it suffices to verify \eqref{fpx} when $(x,y,z)$ is a critical point of the left-hand side, that is to say that
$$ f'(x) = f'(y) = f'(z) = \frac{p}{x+y+z}.$$
Since $f'(z) = \frac{p}{z+z^{1-p}}$, we can rewrite this condition as
\begin{equation}\label{xyo}
 x+x^{1-p} = y+y^{1-p} = z+z^{1-p} = x+y+z.
\end{equation}
The function $x \mapsto x+x^{1-p}$ is increasing for $x < (p-1)^{1/p}$ and decreasing for $x > (p-1)^{1/p}$, so it can only attain any given value at most twice.  From \eqref{xyo} and the pigeonhole principle, we conclude that at least two of $x,y,z$ are equal.  Without loss of generality we may assume $x=y$, then from \eqref{xyo} we have
$$ x^{1-p} = x+z$$
and
$$ z^{1-p} = 2x$$
and hence
$$ x^{1-p} = x + (2x)^{\frac{1}{1-p}}.$$
Dividing by $x$ we obtain
$$
x^{-p} = 1 + 2^{\frac{1}{1-p}} x^{\frac{p}{1-p}}$$
and then setting $u \coloneqq x^{-p}$ we conclude that
\begin{equation}\label{up}
 u = 1 + (u/2)^{\frac{1}{p-1}}.
\end{equation}
The function $u \mapsto 1 + (u/2)^{\frac{1}{p-1}}$ is convex, equals $2$ when $u=2$, $0$ when $u=0$, and is larger than $u$ for sufficiently large $u$.  As a consequence, the equation \eqref{up} has exactly two solutions, one at $u=2$ and one with $u > 2$; see Figure \ref{power-fig}. The second solution can be computed numerically as $u = 10.70297\dots$.  Thus, there are two critical points $(x,y,z)$ with $x=y$, the first of which is
$$(2^{-1/p}, 2^{-1/p}, 2^{-1/p}) = (0.67113\dots, 0.67113\dots, 0.67113\dots),$$
 and the second of which can be computed numerically as
$$ (x,y,z) = (0.25568\dots, 0.25568\dots, 2.48086\dots).$$
At the first critical point, we have $f(x)=f(y)=f(z) = \log(3/2)$, and one easily verifies that the left-hand side of \eqref{fpx} vanishes since $p = \log(27/4)/\log(3)$.  At the second critical point, one can numerically verify that
$$ f(x)=f(y) = 0.089321\dots; \quad f(z) = 1.766695\dots$$
and hence
$$ f(x)+f(y)+f(z)-p \log(x+y+z) = 0.040307\dots > 0$$
at this critical point, giving the claim.

\begin{figure} [t]
\centering
\includegraphics[width=5in]{./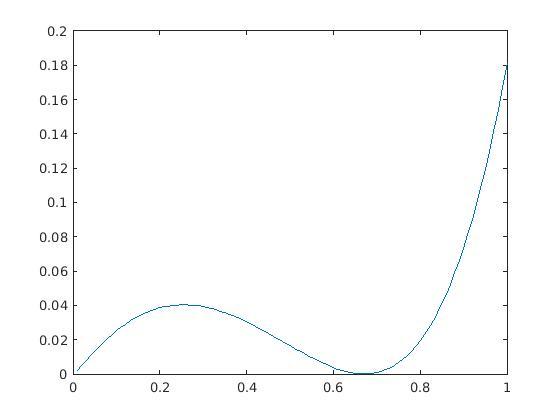}
\caption{A graph of $f(x)+f(y)+f(z)-p\log(x+y+z)$ for $0 \leq x \leq 1$ with $y=x$ and $z = x^{1-p} - x$.}
\label{wave-fig}
\end{figure}

\begin{figure} [t]
\centering
\includegraphics[width=5in]{./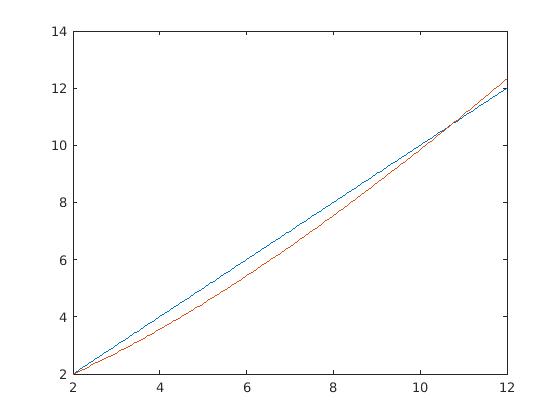}
\caption{A graph of $u$ (blue) and $1 + (u/2)^{\frac{1}{p-1}}$ (red) for $2 \leq u \leq 12$.}
\label{power-fig}
\end{figure}

\begin{remark}  The above methods extend\footnote{We thank Paata Ivanishvili for this comment.} to establish the more general bound
$$ f_1*\dots*f_k(1^n) \leq \|f_1\|_{\ell^p(\{0,1\}^n)} \dots \|f_k\|_{\ell^p(\{0,1\}^n)}$$
for any $k \geq 3$, where now $p \coloneqq \log_k \frac{k^k}{(k-1)^{k-1}}$.  In particular one has
$$ | \{ (A_1,\dots,A_{k-1},A) \in X \times X \times X: A = A_1 \uplus \dots \uplus A_{k-1} \} | \leq |X|^{k/p} $$
for any finite collection of sets.  We sketch the details as follows.  By repeating the above arguments (and using an induction on $k$ to handle boundary cases), one needs to show that
$$ f(x_1)+\dots+f(x_k) - p \log(x_1+\dots+x_k) \geq 0$$
for $\eps \leq x_1,\dots,x_k \leq 1-\eps$.  We can again restrict attention to critical points, in which
$$ x_1+x_1^{1-p} = \dots = x_k + x_k^{1-p} = x_1+\dots+x_k.$$
As before, $x_1,\dots,x_k$ can take only two values, say $x$ and $z$, leading to the equations
$$ x + x^{1-p} = z + z^{1-p} = ax + bz$$
for some positive integers $a,b$ summing to $k$.  Writing $u \coloneqq x^{-p}$ and $v \coloneqq z/x$, we have the system
\begin{align*}
u &= a-1 + bv \\ 
uv^{1-p} &= a + (b-1) v.
\end{align*}
Differentiating the second equation once with respect to $u$ gives
$$\frac{dv}{du} = \frac{v}{(b-1)v^p + (p-1) u} > 0$$
and differentiating twice gives (after some algebra)
$$ \frac{d^2 v}{du^2} = \frac{p-1}{v^p} \frac{dv}{du} \frac{(2-p)u}{(b-1)v^p + (p-1)u} > 0$$
so $v$ is again a convex function of $u$ (since $1 < p < 2$), and so as before the equation $u=a-1+bv$ has at most two solutions, including the one at $u=k-1$ and $v=1$.  Using the equation $x + x^{1-p} = ax+bz$ to implicitly define $z$ in terms of $x$, the function
$$ a f(x) + b f(z) - p \log(ax + bz)$$
then has two critical points, including one at $x=z=(k-1)^{-1/p}$ where the function vanishes.  Direct calculation shows that this critical point is a local minimum (basically because $f''((k-1)^{-1/p}) > 0$, which in turn follows from the inequality $p > \frac{k}{k-1}$), so the function must be positive at the other critical point (otherwise there would be an additional critical point from the mean value theorem and intermediate value theorem), giving the claim.
\end{remark}

\section{A variant for additive energy}

Recall from \cite{taovu} that the \emph{additive energy} $E(A)$ of a finite subset $A$ of an additive group $G$ is defined as the number of quadruples $(a_1,a_2,a_3,a_4) \in A \times A \times A \times A$ such that $a_1+a_2=a_3+a_4$.  We have the trivial bound $E(A) \leq |A|^3$, which is attained for instance when $A$ is itself a finite group.  By modifying the above arguments, we have the following refinement in the discrete cube $\{0,1\}^n$:

\begin{theorem}\label{energy}  Let $n \geq 0$, and let $A \subset \{0,1\}^n$.  Then $E(A) \leq |A|^p$, where $p \coloneqq \log_2 6 = 2.58496\dots$.  Furthermore, the exponent $p$ cannot be replaced by any smaller quantity.
\end{theorem}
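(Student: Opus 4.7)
The plan is to mirror the approach used for Theorem~\ref{main}: reduce to a one-dimensional inequality by an induction on the ambient dimension $n$, and then verify that inequality by calculus. For sharpness, the example $A = \{0,1\}^n$ has $E(A) = 6^n = |A|^{\log_2 6}$, since the constraint $a_1+a_2=a_3+a_4$ decouples across coordinates and each coordinate contributes a factor of $E(\{0,1\}) = 6$.

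For the upper bound I induct on $n$. In the inductive step, partition $A \subset \{0,1\}^n$ by the first coordinate as $A = A_0 \sqcup A_1$, with $A_0, A_1$ identified with subsets of $\{0,1\}^{n-1}$, and set $m_i := |A_i|$. A quadruple $(a_1,a_2,a_3,a_4) \in A^4$ with $a_1+a_2=a_3+a_4$ in $\Z^n$ has first coordinates satisfying $x_1+x_2=x_3+x_4$ in $\{0,1\}$, of which there are exactly six solutions: the two ``pure'' patterns $(0,0,0,0)$ and $(1,1,1,1)$ contribute $E(A_0)+E(A_1)$, while each of the four ``mixed'' patterns contributes the cross-energy
\[
M := \bigl|\{(a,b,c,d) \in A_0 \times A_1 \times A_0 \times A_1 : a+b=c+d\}\bigr|,
\]
giving $E(A) = E(A_0) + E(A_1) + 4M$. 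Writing $M = \sum_w r_0(w) r_1(w)$ with $r_i(w) := |\{(x,y) \in A_i^2 : x-y=w\}|$, the identity $\sum_w r_i(w)^2 = E(A_i)$ combined with Cauchy--Schwarz yields $M \leq \sqrt{E(A_0)\,E(A_1)}$. Applying the inductive hypothesis $E(A_i) \leq m_i^p$ gives
\[
E(A) \;\leq\; m_0^p + m_1^p + 4(m_0 m_1)^{p/2}.
\]

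It remains to prove $x^p + y^p + 4(xy)^{p/2} \leq (x+y)^p$ for $x,y \geq 0$, which by homogeneity is equivalent to
\[
\psi(s) := s^p + (1-s)^p + 4\bigl(s(1-s)\bigr)^{p/2} \leq 1 \qquad (s \in [0,1]).
\]
The identity $2^p = 6$ forces $\psi(0) = \psi(1/2) = \psi(1) = 1$: equality at three points rules out a direct convexity argument, and this is the main obstacle. Using the symmetry $\psi(s) = \psi(1-s)$, it suffices to treat $s \in [0,1/2]$. I would verify $\psi'(0^+) = -p < 0$ and $\psi''(1/2) = \tfrac{4}{3}p(p-3) < 0$ (so $s=1/2$ is a strict local maximum). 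The critical-point equation $\psi'(s) = 0$ factors as $(1-2s)\bigl[2(s(1-s))^{p/2-1} - q(s)\bigr] = 0$, where $q(s) := (s^{p-1} - (1-s)^{p-1})/(2s-1)$; a direct monotonicity argument (in the spirit of the critical-point analysis following~\eqref{fpx}) shows that $q$ is nearly constant on $(0,1/2)$ while $2(s(1-s))^{p/2-1}$ is strictly increasing from $0$ to $\tfrac{4}{3}$, so the bracketed expression vanishes at exactly one point $s^\ast \in (0,1/2)$. One then verifies that $s^\ast$ is a local minimum of $\psi$, confirming $\psi \leq 1$ throughout $[0,1]$ and completing the induction.
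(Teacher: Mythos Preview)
Your approach is essentially identical to the paper's: the same sharpness example $A=\{0,1\}^n$, the same induction on $n$ with the split $E(A)=E(A_0)+4M+E(A_1)$, the same Cauchy--Schwarz bound $M\le\sqrt{E(A_0)E(A_1)}$, and the same reduction to the one-variable inequality $a_0^p+4(a_0a_1)^{p/2}+a_1^p\le(a_0+a_1)^p$.

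The only place your write-up falls short of a proof is the verification of that last inequality. The claim that ``$q$ is nearly constant on $(0,1/2)$'' is not yet rigorous: in fact $q(s)=\dfrac{s^{p-1}-(1-s)^{p-1}}{2s-1}$ moves from $1$ at $s\to0^+$ to $\tfrac{2(p-1)}{3}\approx1.057$ at $s\to1/2$, and without a genuine monotonicity or convexity statement you have not excluded multiple crossings with $2(s(1-s))^{p/2-1}$; you also stop before actually checking the value at $s^\ast$. The paper handles this by normalizing differently, setting $x=a_0/a_1\in[0,1]$ and studying $(x^p+4x^{p/2}+1)/(1+x)^p$. The critical-point equation then reads $x^{p-1}+2x^{(p-2)/2}-2x^{p/2}=1$, and the paper shows the left side is \emph{strictly concave} on $[0,1]$ (with value $0$ at $0$, value $1$ and derivative $p-3<0$ at $1$), forcing exactly two solutions: $x=1$ and one interior point $x\approx0.1317$, at which the inequality is checked numerically. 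That concavity argument is the clean way to close the gap in your sketch.
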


The second claim is clear, since if $A = \{0,1\}^n$ then one easily computes that $|A|=|\{0,1\}|^n = 2^n$ and $E(A) = E(\{0,1\})^n = 6^n$.  As in the previous section, the theorem is proven by induction on $n$ together with an elementary inequality, namely

\begin{lemma}[Elementary inequality]\label{elem}  If $a_0,a_1 \geq 0$, then
$$ a_0^p + 4 (a_0 a_1)^{\frac{p}{2}} + a_1^{p} \leq (a_0+a_1)^{p}.$$
\end{lemma}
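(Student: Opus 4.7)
The plan is to normalize the inequality, reduce it to a single-variable problem on $[0,\tfrac{1}{2}]$, and then control that problem by counting critical points. Both sides of the claimed inequality are positively homogeneous of degree $p$ (and the case $a_0 = a_1 = 0$ is trivial), so I may assume $a_0 + a_1 = 1$ and write $t = a_0 \in [0,1]$. The lemma then becomes the one-variable inequality
$$ \phi(t) \coloneqq t^p + (1-t)^p + 4\bigl(t(1-t)\bigr)^{p/2} \leq 1 \qquad (t \in [0,1]). $$
Using the defining identity $2^p = 6$, one checks that $\phi$ attains the value $1$ at the three points $t = 0,\tfrac{1}{2},1$. The symmetry $\phi(t) = \phi(1-t)$ then reduces matters to $t \in [0,\tfrac{1}{2}]$.

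Next I would analyze the two endpoints $0$ and $\tfrac{1}{2}$. A Taylor expansion at $t=0$ gives $\phi(t) = 1 - pt + 4t^{p/2} + O(t^2)$; since $p/2 > 1$, the linear term dominates for small $t$, so $\phi'(0^+) = -p < 0$. At $t = \tfrac{1}{2}$, the symmetry forces $\phi'(\tfrac{1}{2}) = 0$, and a short calculation (in which the $1-2t$ factor in the derivative of $(t(1-t))^{p/2}$ kills one of the second-derivative contributions) produces $\phi''(\tfrac{1}{2}) = p(p-3)\cdot 2^{3-p}$, which is negative since $p = \log_2 6 < 3$. Thus $t = \tfrac{1}{2}$ is a strict local maximum of $\phi$ with value $1$.

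The heart of the argument is to locate all critical points of $\phi$ in the open interval $(0, \tfrac{1}{2})$. Setting $\phi'(t) = 0$ and rewriting as $(1-t)^{p-1} - t^{p-1} = 2(1-2t)\bigl(t(1-t)\bigr)^{p/2-1}$, the substitution $u \coloneqq (1-t)/t \in (1,\infty)$ causes all factors of $(u+1)^{p-1}$ to cancel; after dividing by $u^{p/2-1}$ the equation collapses to
$$ M(u) \coloneqq u^{p/2} - u^{1-p/2} - 2u + 2 = 0. $$
A direct computation gives $M''(u) = \tfrac{p}{2}\bigl(\tfrac{p}{2} - 1\bigr) u^{-p/2-1}\bigl(u^{p-1} - 1\bigr)$, which is non-negative for $u \geq 1$ because $p/2 > 1$; so $M$ is convex on $[1,\infty)$. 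Combined with $M(1) = 0$, $M'(1) = p - 3 < 0$, and $M(u) \to +\infty$ as $u \to \infty$, this convexity forces $M$ to have exactly one additional zero $u^* > 1$, corresponding to a unique interior critical point $t^* = 1/(u^*+1) \in (0, \tfrac{1}{2})$ of $\phi$.

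With only the two critical points $t^*$ and $\tfrac{1}{2}$ on $[0,\tfrac{1}{2}]$, the sign of $\phi'(0^+)$ and the local-max behavior at $\tfrac{1}{2}$ force $\phi' < 0$ on $(0, t^*)$ and $\phi' > 0$ on $(t^*, \tfrac{1}{2})$. Hence $\phi$ decreases from $\phi(0) = 1$ to the interior minimum $\phi(t^*)$ and climbs back to $\phi(\tfrac{1}{2}) = 1$, giving $\phi \leq 1$ on $[0, \tfrac{1}{2}]$ and, by symmetry, on all of $[0,1]$. I expect the main obstacle to be the algebraic reduction producing the tidy form $M(u)$; once in hand, the convexity of $M$ on $[1,\infty)$ makes the critical-point count — and hence the impossibility of $\phi$ overshooting $1$ in the interior — essentially automatic.
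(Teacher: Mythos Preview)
Your proof is correct and tracks the paper's strategy closely: normalize to one variable, show the critical-point equation has at most one nontrivial root via a convexity/concavity argument, and handle the endpoints separately. Your variable $u=(1-t)/t$ is just the reciprocal of the paper's $x=a_0/a_1$; indeed, substituting $x=1/u$ into the paper's equation $x^{p-1}+2x^{(p-2)/2}-2x^{p/2}=1$ and multiplying through by $u^{p/2}$ yields exactly your $M(u)=0$, so the two critical-point analyses are the same computation in different coordinates.

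The one substantive difference is the endgame. Having isolated the unique interior critical point, the paper simply evaluates both sides numerically there (at $x\approx 0.1317$) to confirm the inequality. Your normalization $a_0+a_1=1$ makes $\phi$ equal to $1$ at \emph{both} ends of $[0,\tfrac12]$; combined with $\phi'(0^+)=-p<0$ and $\phi''(\tfrac12)=p(p-3)2^{3-p}<0$, the single interior critical point is forced to be the global minimum, so $\phi\le 1$ follows without any numerical check. That is a genuinely cleaner finish than the paper's, at the modest cost of the extra second-derivative computation at $t=\tfrac12$.
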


\begin{figure} [t]
\centering
\includegraphics[width=5in]{./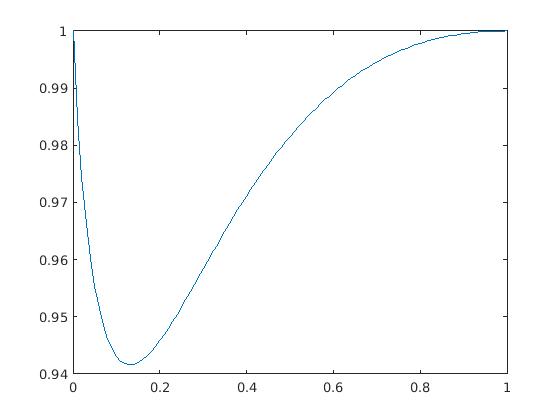}
\caption{A graph of $(x^p+4x^{p/2}+1)/(1+x)^p$ for $0 \leq x \leq 1$.}
\label{energy-fig}
\end{figure}

\begin{proof}  By symmetry and scaling we may assume that $a_1=1$ and $a_0 = x \in [0,1]$, thus we need to show that
$$ x^p + 4 x^{p/2} + 1 \leq (1+x)^p$$
for $0 \leq x \leq 1$ (see Figure \ref{energy-fig}).  Near $x=0$, the left-hand side is $1 + O( x^{p/2} )$ and the right-hand side is at least $1+px$, so the claim holds for $x$ sufficiently close to zero.  At $x=1$, the function $x^p + 4x^{p/2}+1$ takes the value of $6$, first derivative of $3p$, and second derivative of
$$ p(p-1) + p (p-2) = 5.60917\dots,$$
while $(1+x)^p$ takes the value of $2^p = 6$, first derivative of $p 2^{p-1} = 3p$, and second derivative of
$$ p(p-1) 2^{p-2} = 6.14560\dots,$$
so the claim also holds for $x$ sufficiently close to $1$.  It thus suffices to verify the inequality at any critical point of the functional
$$ \frac{x^p + 4x^{p/2} + 1}{(1+x)^p}$$
in $0 < x < 1$.  Differentiating, we see that such a critical point solves the equation
$$ (p x^{p-1} + 2 p x^{(p-2)/2}) (1+x)^p = (x^p + 4x^{p/2} + 1) p (1+x)^{p-1}$$
which simplifies to
\begin{equation}\label{x1}
 x^{p-1} + 2 x^{(p-2)/2} - 2 x^{p/2} = 1.
\end{equation}
The second derivative of the left-hand side is
$$ \frac{p-2}{2} x^{\frac{p-6}{2}} ( (2p-2) x^{p/2} + p-4 - p x );$$
since $(2p-2)x^{p/2} \leq (2p-2)x \leq px$ and $p<4$, we conclude that $x^{p-1} + 2 x^{(p-2)/2} - 2 x^{p/2}$ is strictly concave.  As this function is $0$ at $x=0$ and $1$ at $x=1$, and has a derivative of $p-3 < 0$ at $x=1$, there are exactly two solutions to \eqref{x1} for $0 \leq x \leq 1$, one at $x=1$ and another with $0 < x < 1$; see Figure \ref{energy2-fig}.  The second solution can be numerically evaluated as $x = 0.131657\dots$, at which
$$ x^p + 4x^{p/2} + 1 = 1.29634\dots$$
and
$$ (1+x)^p = 1.376738\dots$$
giving the claim.
\end{proof}

\begin{figure} [t]
\centering
\includegraphics[width=5in]{./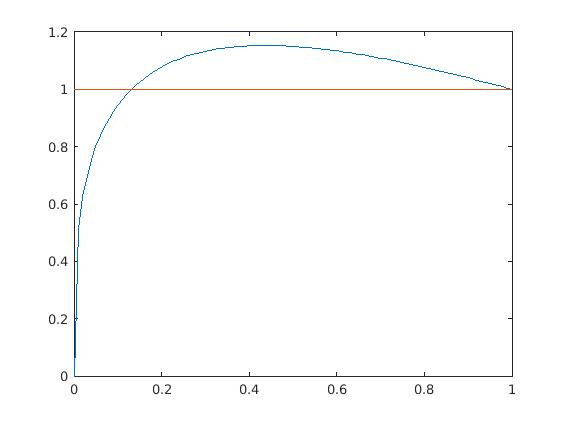}
\caption{A graph of $x^{p-1} + 2 x^{(p-2)/2} - 2 x^{p/2}$ for $0 \leq x \leq 1$.}
\label{energy2-fig}
\end{figure}

Now we establish Theorem \ref{energy}.  The claim is trivial for $n=0$, so suppose that $n \geq 1$ and that the claim has already been proven for $n-1$.  For $A \subset \{0,1\}^n$, we may partition
$$ A = (A_0 \times \{0\}) \uplus (A_1 \times \{1\})$$
for some $A_0,A_1 \subset \{0,1\}^n$.  We can then split
$$ E(A) = E(A_0) + 4 | \{ (a_0,a_1,a'_0,a'_1) \in A_0 \times A_1 \times A_0 \times A_1: a_0 + a_1 = a'_0 + a'_1 \}| + E(A_1).$$
By the Cauchy-Schwarz inequality (and writing $a_0+a_1 = a'_0 + a'_1$ as $a_0-a'_0 = a'_1 - a_1$) we have
$$ | \{ (a_0,a_1,a'_0,a'_1) \in A_0 \times A_1 \times A_0 \times A_1: a_0 + a_1 = a'_0 + a'_1 \}| \leq E(A_0)^{1/2} E(A_1)^{1/2}$$
and hence by the induction hypothesis
$$ E(A) \leq |A_0|^p + 4 (|A_0| |A_1|)^{p/2} + |A_1|^p.$$
Applying Lemma \ref{elem} and noting that $|A_0|+|A_1| = |A|$, we obtain $E(A) \leq |A|^p$, closing the induction.

\begin{remark}  The same argument shows that
$$ \| f*f \|_{\ell^2} \leq \|f\|_{\ell^{4/p}}^2$$
for any function $f: \{0,1\}^n \to \C$ (where the convolution $f*f$ is viewed as a function on $\{0,1,2\}^n$).  By several applications of the Cauchy-Schwarz inequality, this implies that
$$ |\int_{a_1,a_2,a_3,a_4 \in \{0,1\}^n: a_1 + a_2 = a_3 + a_4} f_1(a_1) f_2(a_2) f_3(a_3) f_4(a_4)|\leq \|f_1\|_{\ell^{4/p}} \|f_2\|_{\ell^{4/p}} \|f_3\|_{\ell^{4/p}} \|f_4\|_{\ell^{4/p}} $$
for any functions $f_1,f_2,f_3,f_4: \{0,1\}^n \to \C$. Thus, for instance, if $A_1,A_2,A_3,A_4 \in \{0,1\}^n$, the number of solutions to $a_1+a_2=a_3+a_4$ with $a_1 \in A_1, a_2 \in A_2, a_3 \in A_3, a_4 \in A_4$ is at most $|A_1|^{p/4} |A_2|^{p/4} |A_3|^{p/4} |A_4|^{p/4}$.
\end{remark}

\begin{remark} In \cite{bl}, the method of compressions is used to obtain optimal lower bounds for the size $|A+B|$ of a sumset of two subsets $A,B$ of $\{0,1\}^n$ of specified cardinality.  It is possible that compression methods could also be used to obtain an alternate proof of Theorem \ref{energy}, and perhaps to also refine the upper bound of $|A|^{\log_2 6}$ slightly when $|A|$ is not a power of two.  However, we were unable to use the method of compressions to attack Theorem \ref{main}.
\end{remark}

\section{Acknowledgments}

The authors would like to thank Siavash Mirarab for bringing this problem to our attention, and David Speyer and the anonymous referee for helpful comments. DK is supported by NSF award CCF-1553288 (CAREER).  TT is supported by NSF grant DMS-1266164, the James and Carol Collins Chair, and by a Simons Investigator Award.


\end{document}